\newtheorem{thm}{Theorem}[section]
\newtheorem{cor}[thm]{Corollary}
\newtheorem{lem}[thm]{Lemma}
\newtheorem{hip}[thm]{Conjecture}
\theoremstyle{definition}
\newtheorem{defin}[thm]{Definition}
\newtheorem{rem}[thm]{Remark}
\begin{document}

\title{Note on the Davenport's constant for finite abelian groups with rank three}

\author{Maciej Zakarczemny (Cracow)}

\date{}

\maketitle

\renewcommand{\thefootnote}{}

\footnote{2010 \emph{Mathematics Subject Classification}: Primary
 11P70; Secondary 11B50.}

\footnote{\emph{Key words and phrases}: Davenport's constant, abelian group, sequence}

\renewcommand{\thefootnote}{\arabic{footnote}}
\setcounter{footnote}{0}

	\begin{abstract}
	\noindent Let $G$ be a finite abelian group and $D(G)$ denote the Davenport constant of $G$.
	We derive new upper bound for the Davenport constant for all groups of rank three.
	Our main result is that:
	$$D(C_{n_1}\oplus C_{n_2}\oplus C_{n_3})\le (n_1-1)+(n_2-1)+(n_3-1)+1+ (a_3-3)(n_1-1),$$
where $1<n_1|n_2|n_3\in\mathbb{N}$ and $a_3\le 20369$ is a constant.\\Therefore $D(C_{n_1}\oplus C_{n_2}\oplus C_{n_3})$ grows linearly with the variables $n_1,n_2,n_3.$ The new result is the given upper bound for $a_3$.\\
Finally, we give an application of the Davenport constant to smooth numbers.
	\end{abstract}

\section{Introduction}
${}^{}$\indent We study the Davenport constant, a central combinatorial invariant
which has been investigated since Davenport popularized it in the 60's, see \cite{GGZ}, \cite{GHK}, \cite{RG} for a survey.
We derive new explicit upper bound for the Davenport constant for groups of rank three.
The exact value of the Davenport constant for groups of rank three is still unknown and this is an open and well-studied problem, see \cite{BGAA},\cite{BGWS1},\cite{BGWS2}.

\section{Basic notations}
${}^{}$\indent Let $\mathbb{N}$ denote the set of the positive integers (natural numbers). We set \\
$[a,b]=\{x:a\le x\le b,\,\,x\in\mathbb{Z}\},$ where $a,b\in\mathbb{Z}.$
Let $G$ be a non-trivial additive finite abelian group. $G$ can be uniquely decomposed as a direct sum of cyclic groups
$C_{n_1}\oplus C_{n_2}\oplus \ldots \oplus C_{n_r}$ with the integers satisfying $1<n_1|\ldots|n_r.$ The number of summands in the above decomposition of $G$ is denoted by $r = r(G)$ and called the rank of $G$.
The integer $n_r$ denotes the exponent $\exp(G).$ In addition, we define $D^*(G)$ as $D^*(G) =1+ \sum\limits_{i=1}^r (n_i- 1)$.\\
We denote by $\mathcal{F}(G)$ the free, abelian, multiplicatively written monoid with basis $G.$ An element $S\in \mathcal{F}(G)$ is called sequence over $G$. We write any finite sequence $S$ of $l$ elements of $G$ in the form $\prod_{g\in G}g^{\nu_g(S)}=g_1\cdot\ldots\cdot g_l,$
where $l$ is the length of $S,$ denoted by $|S|; \nu_g(S)$ is the multiplicity of $g$ in $S.$
The sum of $S$ is defined as $\sigma (S)=\sum_{g\in G}\nu_g(S)g.$\\
Our notation and terminology is consistent with \cite{RG} and \cite{EEGKR}.\\
The Davenport constant $D(G)$ is defined as the smallest natural number $t$ such that each sequence over $G$ of length at least $t$
has a non-empty zero-sum subsequence. Equivalently, $D(G)$ is the maximal length of a zero-sum sequence of the elements of $G$ and with no proper
zero-sum subsequence. The best bounds for $D(G)$ known so far are:
\begin{equation}\label{xeq1}
D^*(G)\le D(G)\le \exp(G)\left(1+\log{\tfrac{|G|}{\exp(G)}}\right).
\end{equation}
See \cite[Theorem 7.1]{EBKAIII} and \cite[Theorem 1.1]{AGP}.

\section{Theorems and definitions}
\begin{defin}\label{th8}
For an additive finite abelian group $G$, and $m\in\mathbb{N}.$\\ We denote by:
\begin{itemize}
\item[1.] $D_m(G)$ the smallest natural number $t$ such that every sequence $S$ over $G$ of length $|S|\ge t$ contains at least $m$ disjoint and non-empty subsequences $S_1',S_2',\ldots,S_m'|S$ such that
$\sigma(S_i')=0$ for $i\in [1,m].$
\item[2.] $\eta (G)$ the smallest natural number $t$ such that every sequence $S$ over $G$ of length $|S|\ge t$ contains a non-empty subsequences $S'|S$ such that $\sigma(S')=0$, $|S'|\in [1,\exp(G)].$
\item[3.] $s(G)$ the smallest natural number $t$ such that every sequence $S$ over $G$ of length $|S|\ge t$ contains a non-empty subsequences $S'|S$ such that $\sigma(S')=0$, $|S'|=\exp(G).$
\end{itemize}
\end{defin}
\begin{rem}\label{th4}
$D_m(G)$  is called the $m$-th Davenport constant and $s(G)$ the Erd\"os-Ginzburg-Ziv  constant.
In this notation $D(G)=D_1(G).$ See also \cite{FH}, \cite{YGQZ}, \cite{FS10}.
\end{rem}
\begin{lem}\label{tomerge}
Let $G$ be a finite abelian group, $H$ subgroup of $G$, and $k$~a~natural number. Then
\begin{equation}\label{xeq2}
D(G)\le D_{D(H)}(G/H),
\end{equation}
\begin{equation}\label{xeq3}
D_{k}(G)\le \exp(G)(k-1)+\eta(G).
\end{equation}
\end{lem}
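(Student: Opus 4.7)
The plan is to handle the two inequalities separately, both by short reductions that exploit the definitions directly.

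For \eqref{xeq2}, I would start with a sequence $S = g_1\cdots g_l$ over $G$ of length $l \ge D_{D(H)}(G/H)$ and push it through the canonical projection $\pi\colon G\to G/H$, obtaining a sequence $\pi(g_1)\cdots\pi(g_l)$ of the same length over $G/H$. By the definition of $D_{D(H)}(G/H)$, this projected sequence contains $D(H)$ pairwise disjoint non-empty subsequences whose sums vanish in $G/H$. Lifting the index sets back to $S$, I obtain disjoint non-empty subsequences $S_1,\ldots,S_{D(H)}$ of $S$ whose sums $\sigma(S_1),\ldots,\sigma(S_{D(H)})$ all lie in $H$. Viewing these $D(H)$ sums as a sequence over $H$, the definition of $D(H)$ supplies a non-empty index set $I\subseteq[1,D(H)]$ with $\sum_{i\in I}\sigma(S_i)=0$ in $H$. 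Concatenating the corresponding (pairwise disjoint) $S_i$ yields a non-empty zero-sum subsequence of $S$, which is what is needed.

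For \eqref{xeq3}, I would induct on $k$. The case $k=1$ reads $D(G)\le\eta(G)$, which is immediate because any zero-sum subsequence guaranteed by $\eta(G)$ (of length in $[1,\exp(G)]$) is in particular a non-empty zero-sum subsequence. For the step $k-1\Rightarrow k$, take any $S$ with $|S|\ge\exp(G)(k-1)+\eta(G)$. Since $|S|\ge\eta(G)$, the definition of $\eta(G)$ gives a non-empty zero-sum subsequence $S'\mid S$ with $|S'|\le\exp(G)$. Removing it, the complementary subsequence $S''$ has length
\[
|S''|\;\ge\;\exp(G)(k-1)+\eta(G)-\exp(G)\;=\;\exp(G)(k-2)+\eta(G)\;\ge\;D_{k-1}(G)
\]
by the induction hypothesis, hence contains $k-1$ pairwise disjoint non-empty zero-sum subsequences. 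Adjoining $S'$ produces the $k$ disjoint non-empty zero-sum subsequences of $S$ required by the definition of $D_k(G)$.

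Neither part presents a serious obstacle: both are standard ``peel-off'' arguments built on top of the definitions. The only points that require minor care are, in \eqref{xeq2}, that disjointness in $\pi(S)$ lifts to disjointness in $S$ (automatic since $\pi$ is applied componentwise) and that $I\neq\emptyset$ is used to guarantee the combined subsequence is non-empty, and, in \eqref{xeq3}, that the bound $|S'|\le\exp(G)$ supplied by $\eta(G)$ is precisely what drives the inductive length bookkeeping.
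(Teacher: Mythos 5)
Your proof is correct. The paper itself gives no argument for this lemma---it only cites \cite{FS10} and \cite{GHK}---and your two reductions (projecting to $G/H$, lifting the $D(H)$ disjoint zero-sum blocks, and recombining their sums inside $H$ for \eqref{xeq2}; the induction on $k$ peeling off a zero-sum subsequence of length at most $\exp(G)$ for \eqref{xeq3}) are precisely the standard proofs found in those references.
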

\begin{proof}
See \cite[Remark 3.3.3, Theorem 3.6]{FS10} and  \cite[Lemma 6.1.3]{GHK}.
\end{proof}
\begin{lem}\label{lem01}
Let $G$ be a finite abelian group.
\begin{enumerate}
\item[1.] If $G=C_{n_1}\oplus C_{n_2}$ with $1\le n_1|n_2,$ then
$$s(G)=2n_1+2n_2-3,\, \eta(G)=2n_1+n_2-2,\, D(G)=n_1+n_2-2=D^*(G).$$
\item[2.] $D(G)\le \eta(G)\le s(G)-exp(G)+1.$
\end{enumerate}
\end{lem}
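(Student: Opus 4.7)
I would dispatch Part 2 first, as both inequalities follow directly from the definitions. The bound $D(G)\le\eta(G)$ is immediate: any zero-sum subsequence of length in $[1,\exp(G)]$ is \emph{a fortiori} non-empty, so any sequence long enough to guarantee the $\eta$-property is long enough to guarantee the $D$-property. For $\eta(G)\le s(G)-\exp(G)+1$ I would use a zero-padding trick. Given a sequence $S$ over $G$ with $|S|\ge s(G)-\exp(G)+1$, I append $\exp(G)-1$ copies of $0\in G$ to form a sequence $S'$ of length at least $s(G)$; by the definition of $s(G)$ there is a zero-sum subsequence $T$ of $S'$ of length exactly $\exp(G)$. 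Since only $\exp(G)-1$ zeros were appended, at least one term of $T$ must lie in $S$, and removing the appended zeros from $T$ yields the desired non-empty zero-sum subsequence of $S$ with length in $[1,\exp(G)]$. Incidentally, in the rank-two case of Part 1 one checks $s(G)-\exp(G)+1=\eta(G)$, so this inequality is in fact sharp, which is a useful internal consistency check.

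For Part 1 I would split each of the three formulas into a lower and an upper bound. The lower bounds come from standard extremal sequences: fixing a basis $(e_1,e_2)$ of $G$ with $\mathrm{ord}(e_i)=n_i$, the zero-sum-free sequence $e_1^{n_1-1}e_2^{n_2-1}$ gives $D(G)\ge D^*(G)$, and suitable concatenations (adding $n_1-1$ copies of $e_1+e_2$ for $\eta$, and further $n_2-1$ terms of order $n_2$ for $s$) realize the stated values. The upper bound $D(G)\le D^*(G)$ in rank two is Olson's classical theorem, proved by a group-ring / character-sum argument, which I would import as a black box. The upper bound for $s(G)$ reduces, via the Gao--Geroldinger reduction from an arbitrary rank-two group to the homocyclic case, to Reiher's proof of the Kemnitz conjecture $s(C_n^2)=4n-3$; the formula for $\eta(G)$ sits in the same circle of ideas, and I would likewise cite rather than reprove it.

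The main obstacle is clearly the upper-bound side of Part 1, and in particular $s(G)\le 2n_1+2n_2-3$, which ultimately rests on Reiher's theorem---a nontrivial result I do not plan to reprove here. Everything else is either an elementary manipulation of zero-sum subsequences or a routine extremal construction, so my strategy is to pair those elementary steps with clean citations to Olson, Gao--Geroldinger, and Reiher for the deep upper bounds, exactly in the spirit of the references already invoked in Lemma \ref{tomerge}.
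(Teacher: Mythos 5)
Your proposal is correct in substance and, like the paper, ultimately rests on citing the literature for the two deep upper bounds: the paper's entire proof is the citation to \cite[Theorem 5.8.3, Lemma 5.7.2]{GHK}, whose content is exactly Olson's theorem for $D$, Reiher's resolution of the Kemnitz conjecture together with the reduction to the homocyclic case for $s$, and the elementary padding inequality of Part 2. What you add --- the explicit extremal sequences for the lower bounds and the zero-padding proof of $\eta(G)\le s(G)-\exp(G)+1$ --- is the standard way these facts are established, and your observation that $s(G)-\exp(G)+1=\eta(G)$ in rank two is a genuine sanity check (the inequality of Part 2 is tight there). So the route is the same; you have merely unpacked the citation.

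Two corrections. First, the statement as printed asserts $D(G)=n_1+n_2-2=D^*(G)$, but by the paper's own definition $D^*(G)=1+(n_1-1)+(n_2-1)=n_1+n_2-1$; your extremal sequence $e_1^{n_1-1}e_2^{n_2-1}$ has length $n_1+n_2-2$ and is zero-sum free, so it already shows $D(G)\ge n_1+n_2-1$ and refutes the printed value. The ``$-2$'' is a typo (the lemma is later applied in \eqref{xeq7} with $D(H)=\tfrac{n_2}{n_1}+\tfrac{n_3}{n_1}-1$, i.e.\ with the correct value), and your argument establishes the corrected statement. Second, for the lower bound on $s(G)$ you propose appending $n_2-1$ ``terms of order $n_2$''; this fails, since for instance appending $n_2-1$ copies of $e_2$ to $e_1^{n_1-1}e_2^{n_2-1}(e_1+e_2)^{n_1-1}$ creates the zero-sum subsequence $e_2^{n_2}$ of length exactly $n_2$. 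What you want is to append $n_2-1$ copies of $0$ to the $\eta$-extremal sequence; this is precisely your own padding trick run in reverse and gives the general inequality $s(G)\ge\eta(G)+\exp(G)-1$, which here yields $2n_1+2n_2-3$.
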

\begin{proof}
See \cite[Theorem 5.8.3, Lemma 5.7.2]{GHK} .
\end{proof}

\begin{rem}\label{AlDu}
Alon and Dubiner proved that for every natural $r$ and every prime $p$ we have
\begin{equation}\label{AD}
 s(C_p^r)\le c(r) p,
\end{equation}
where $c(r)$ is recursively defined as follows
\begin{equation}\label{AD1}
c(r)=256 r(\log_2 r+5)c(r-1)+(r+1) \,\,\,\mathrm{for}\,\,\, r\ge 2,\,\,c(1)=2,
\end{equation}
There is a misprint in the corresponding formulas \cite[(6)]{AlD}, \cite[(1.4)]{ChMG}.\\
It should be $c(r)=256 r(\log_2 r+5)c(r-1)+(r+1)$ instead of\\
$c(r)=256 (r\log_2 r+5)c(r-1)+(r+1),$ for more details, see \cite[Remark 3.7]{EEGKR}.
Note that $s(C_p^2)=4p-3\le 4p$ (see, Lemma \ref{lem01}), thus we can start a recurrence  with initial term $c(2)=4$
and get $c(3)<20233.005.$
\end{rem}
\begin{rem}\label{AlDu2}
The method use in \cite{AlD} yields that for every natural number $r\ge 1,$ there exists $a_r>0$ such that for every natural number $n$ we have
\begin{equation}\label{xeq5g}
\eta (C_n^r)\le a_r(n-1)+1.
\end{equation}
We identify $a_r$ with its smallest possible value. It is known that
\begin{equation}\label{xeq5c}
2^r-1\le a_r \le (cr\log{r})^r,
\end{equation}
where $c>0$ is an absolute constant. We know also that $a_1=1,\,a_2=3.$\\
See, \cite{BGAA} and Lemma \ref{lem01}.
\end{rem}
\begin{thm}\label{thE14} \textnormal{(Edel, Elscholtz, Geroldinger, Kubertin, Rackam \cite[Theorem 1.4]{EEGKR})}
Let $G=C_{n_1}\oplus\ldots\oplus C_{n_r}$ with $r=r(G)$ and $1<n_1|\ldots|n_r.$\\
Let $b_1,\ldots, b_r\in\mathbb{N}$ such that for all primes $p$ with $p|n_r$ and all $i\in[1,r],$\\
we have $s(C_p^i)\le b_i(p-1)+1.$ Then
\begin{equation}\label{sssG}
s(G)\le \sum\limits_{i=1}^r(b_{r+1-i}-b_{r-i})n_i-b_r+1,
\end{equation}
where $b_0=0.$ In particular, if $n_1=\ldots=n_r=n,$ then $s(G)\le b_r(n-1)+1.$
\end{thm}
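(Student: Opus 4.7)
The plan is to proceed by strong induction on $|G|$, applying at each step a carefully chosen prime $p \mid n_1$ together with the recursive inequality
\[
 s(H) \le s(H/pH) + p\bigl(s(pH) - 1\bigr),
\]
valid for any finite abelian group $H$ and prime $p \mid \exp(H)$. One proves this recursion by a standard extraction argument: project a sequence $S \in \mathcal{F}(H)$ of length $s(H/pH)+p(s(pH)-1)$ onto $H/pH$ and iteratively extract disjoint zero-sum subsequences of the projection of length $\exp(H/pH)=p$. Since after $k-1$ extractions the projected remainder still has length $\ge s(H/pH)$ exactly as long as $k \le s(pH)$, this delivers $s(pH)$ disjoint blocks of length $p$ whose $H$-sums lie in $pH$. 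Applying the definition of $s(pH)$ to the resulting sequence in $pH$ produces a zero-sum in $pH$ of length $\exp(pH)=\exp(H)/p$, and uniting the corresponding blocks yields a zero-sum in $H$ of length exactly $\exp(H)$.

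It is convenient to rewrite the target in the telescoping form
\[
 s(G) \le 1 + \sum_{j=1}^{r}(b_j - b_{j-1})\bigl(n_{r+1-j}-1\bigr),
\]
obtained by expanding $b_r = \sum_{j=1}^{r}(b_j - b_{j-1})$. The base case $G=\{0\}$ is immediate. For the inductive step pick any prime $p \mid n_1$ (which exists since $n_1 > 1$); the divisibility $n_1 \mid n_i$ forces $p \mid n_i$ for every $i$, so $G/pG \cong C_p^{\,r}$ and the theorem's hypothesis yields $s(G/pG) \le b_r(p-1)+1$. The group $pG$ has invariant factor decomposition $C_{n_1/p}\oplus\cdots\oplus C_{n_r/p}$ after dropping any trivial summands; it has strictly smaller order than $G$ and its exponent still divides $n_r$, so the inductive hypothesis applies and supplies the telescoping bound for $s(pG)$ with each $n_i$ replaced by $n_i/p$. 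Substituting into the recursion, every contribution $p\cdot(n_i/p-1) = n_i - p$ combines with the outer $b_r(p-1)+1$ to collapse, after a direct algebraic simplification, exactly to the target formula. The \emph{in particular} case $n_1=\ldots=n_r=n$ then follows by specializing the telescoping sum to $1+(n-1)b_r$.

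The main obstacle I foresee is the bookkeeping in the rank-drop case, where $k \ge 1$ of the invariant factors $n_i$ equal $p$. Then $pG$ has rank $r'=r-k<r$, and the inductive hypothesis only supplies the partial telescoping sum $\sum_{j=1}^{r'}(b_j-b_{j-1})\bigl(n_{r+1-j}/p-1\bigr)$, missing the tail contributions $\sum_{j=r'+1}^{r}(b_j-b_{j-1})(p-1) = (p-1)(b_r-b_{r'})$ that the target requires. These missing terms must be recovered from the discrepancy between the $b_r(p-1)$ arising from $s(G/pG)$ and the aggregate $(p-1)b_{r'}$ produced inside $p \cdot \sum_{j=1}^{r'}(b_j-b_{j-1})$. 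The identity $b_r(p-1)-(p-1)b_{r'}=(p-1)(b_r-b_{r'})$ makes the algebra balance exactly, so once this verification is carried out attentively the induction closes.
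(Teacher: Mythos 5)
Your proposal is correct, but note that the paper itself offers no proof of this statement: it is quoted verbatim from the cited source \cite[Theorem 1.4]{EEGKR}, so there is nothing internal to compare against. Your induction on $|G|$ via the standard recursion $s(G)\le s(G/pG)+p\,(s(pG)-1)$ (valid because $\exp(G)=p\cdot\exp(pG)$ when $p\mid n_1$, with the usual block-extraction argument) is essentially the argument given in that reference, and the rank-drop bookkeeping you flag does close as you describe: the tail $(p-1)(b_r-b_{r'})$ from the invariant factors equal to $p$ is exactly compensated by the surplus $b_r(p-1)-b_{r'}(p-1)$ left over after the $j\le r'$ terms telescope, and the degenerate case $pG=\{0\}$ (all $n_i=p$) is covered by $s(\{0\})=1$.
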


\begin{lem}\label{th13a}
Let $n\ge 2$ be a natural number. Then
\begin{equation}\label{xeq5b}
\eta (C_n^3)\le 20369(n-1)+1 \mathrm{\,\,\,\,and\,\,\,\,} s(C_n^3)\le 20370(n-1)+1.
\end{equation}
Therefore $a_3\le 20369.$
\end{lem}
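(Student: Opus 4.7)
The plan is to bound $s(C_n^3)$ via Theorem \ref{thE14} and then deduce the bound on $\eta(C_n^3)$ using Lemma \ref{lem01}(2). Since $\exp(C_n^3)=n$, Lemma \ref{lem01}(2) gives $\eta(C_n^3) \le s(C_n^3) - (n-1)$, so it suffices to prove $s(C_n^3) \le 20370(n-1)+1$; this immediately yields $\eta(C_n^3) \le 20369(n-1)+1$ and hence $a_3 \le 20369$ by the definition in Remark \ref{AlDu2}.

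To invoke Theorem \ref{thE14} for $G = C_n^3$ with $n_1=n_2=n_3=n$, I take $b_1=2$ and $b_2=4$, which are valid by $s(C_p)=2p-1$ and $s(C_p^2)=4p-3$ from Lemma \ref{lem01}(1). The substantive step is to verify that $b_3 = 20370$ is admissible, i.e.\ $s(C_p^3) \le 20370(p-1)+1$ for every prime $p$. For $p \ge 149$, the Alon--Dubiner bound from Remark \ref{AlDu} gives $s(C_p^3) \le c(3)\,p$ with $c(3) < 20233.005$; the target inequality rearranges to $(20370-c(3))(p-1) \ge c(3)-1$, and this is implied by $136.995\,(p-1) \ge 20232.005$, hence by $p \ge 149$. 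For the finitely many primes $p \le 139$, I would appeal to explicit bounds on $s(C_p^3)$ available in the literature; the values are far below the threshold $20370(p-1)+1$, which already exceeds $2.8\times 10^6$ at $p=139$.

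With $b_1,b_2,b_3$ validated, the equal-exponent case of Theorem \ref{thE14} collapses to $s(C_n^3) \le b_3(n-1)+1 = 20370(n-1)+1$, the second inequality of the lemma. Lemma \ref{lem01}(2) then gives $\eta(C_n^3) \le 20370(n-1)+1 - (n-1) = 20369(n-1)+1$, the first inequality, completing the proof.

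I expect the main technical obstacle to be the small-prime verification for $p \le 139$: the Alon--Dubiner recursion is just sharp enough to imply the target inequality for $p \ge 149$ (at $p = 139$, AD alone misses by about $1300$), so the remaining primes must be handled by tighter, case-specific bounds rather than by the recursion of Remark \ref{AlDu}. These bounds are finite in number and well-documented, but locating and assembling them is where the genuine (non-mechanical) content of the proof is concentrated.
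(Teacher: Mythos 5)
Your proposal follows the paper's proof almost exactly: the same choice $b_1=2$, $b_2=4$, $b_3=20370$ in Theorem \ref{thE14}, the same use of the Alon--Dubiner bound $s(C_p^3)<20233.005\,p$ for $p\ge 149$, and the same final step $\eta(C_n^3)\le s(C_n^3)-\exp(C_n^3)+1$ from Lemma \ref{lem01}. The one place you leave open --- the primes $p\le 139$, which you flag as the ``genuine (non-mechanical) content'' requiring case-specific literature bounds --- is in fact the easiest step in the paper: it invokes the general inequality $s(G)\le |G|+\exp(G)-1$ (\cite[Theorem 5.7.4]{GHK}), giving $s(C_p^3)\le p^3+p-1$, and $p^3+p-2=(p-1)(p^2+p+2)<20370(p-1)$ holds precisely because $p^2+p+2\le 139^2+139+2=19462<20370$. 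So no hunting for explicit values is needed; your assessment of where the difficulty lies is the only point at which your write-up diverges from (and slightly overestimates) the actual argument.
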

\begin{proof}
For every finite abelian group $G$, by \cite[Theorem 5.7.4]{GHK} we have
\begin{equation}\label{GaoYang}
s(G)\le |G|+\exp(G)-1.
\end{equation}
Thus, if $p$ is a prime number such that $2\le p\le p_{34}=139$ then

\begin{equation}\label{fff1}
s(C_p^3)\le p^3+p-1< 20370(p-1)+1.
\end{equation}
Assume now that $p$ is a prime number such that $p\ge p_{35}=149.$\\
By Remark \ref{AlDu} we have
$s(C_p^3)< 20233.005 \,p<20370(p-1)+1,$
since $p\ge 149.$
Therefore for all primes $p$ we have
$s(C_p^3)< 20370(p-1)+1.$\\
By Lemma \ref{lem01} we also have $s(C_p)=2(p-1)+1,\,s(C_p^2)=4(p-1)+1$\\
for all primes $p.$\\
Hence by Theorem \ref{thE14} we obtain the upper bound
$$s(C_n^3)\le 20370(n-1)+1,$$
for all natural $n\ge 2.$ Thus, by Lemma \ref{lem01} we obtain
$$\eta(C_n^3)\le 20369(n-1)+1,$$
for all natural $n\ge 2.$
\end{proof}
\begin{thm}\label{th14a}
For an abelian group $C_{n_1}\oplus C_{n_2}\oplus C_{n_3}$ such that\\
$1<n_1|n_2|n_3\in\mathbb{N},$ there exists an absolute constant $a_3\le 20369$ such that:
\begin{equation}\label{xeq5}
D(C_{n_1}\oplus C_{n_2}\oplus C_{n_3})\le (n_1-1)+(n_2-1)+(n_3-1)+1+ (a_3-3)(n_1-1).
\end{equation}
\end{thm}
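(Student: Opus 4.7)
The plan is to apply Lemma \ref{tomerge} twice, telescoping, with a carefully chosen subgroup $H$ of $G=C_{n_1}\oplus C_{n_2}\oplus C_{n_3}$. The natural choice is
$$H = \{0\} \oplus n_1 C_{n_2} \oplus n_1 C_{n_3},$$
where $n_1 C_{n_i}$ denotes the unique subgroup of $C_{n_i}$ of index $n_1$. Since $n_1\mid n_2\mid n_3$ this is well defined, and one checks immediately that $H\cong C_{n_2/n_1}\oplus C_{n_3/n_1}$, while $G/H\cong C_{n_1}^{3}$. The purpose of this particular choice is twofold: the quotient becomes elementary of exponent exactly $n_1$, which is the setting in which Lemma \ref{th13a} yields a good $\eta$-bound; and the subgroup has rank at most $2$, so that Lemma \ref{lem01} supplies a closed-form value for $D(H)$.

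The two inequalities of Lemma \ref{tomerge} give, in succession,
$$D(G) \le D_{D(H)}(G/H) = D_{D(H)}(C_{n_1}^{3}) \le n_1\bigl(D(H)-1\bigr) + \eta(C_{n_1}^{3}),$$
since $\exp(C_{n_1}^3)=n_1$. From Lemma \ref{th13a} I would use $\eta(C_{n_1}^{3})\le a_3(n_1-1)+1$ with $a_3\le 20369$, and from Lemma \ref{lem01} applied to $H$, $D(H) = \tfrac{n_2}{n_1}+\tfrac{n_3}{n_1}-1$ (with the evident specializations in the degenerate cases $n_1=n_2$ or $n_1=n_2=n_3$, where $H$ becomes cyclic or trivial).

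Substituting both ingredients,
$$D(G) \le n_1\!\left(\tfrac{n_2}{n_1}+\tfrac{n_3}{n_1}-2\right) + a_3(n_1-1) + 1 = n_2+n_3-2n_1 + a_3(n_1-1)+1,$$
and a one-line rearrangement rewrites the right-hand side as $(n_1-1)+(n_2-1)+(n_3-1)+1+(a_3-3)(n_1-1)$, which is the claim. The only real decision point in the proof is the choice of $H$, where the interplay between the rank-$3$ input, the elementary quotient, and the rank-$2$ subgroup has to be set up so that both halves of Lemma \ref{tomerge} can be applied with sharp constants; beyond that, the argument is a direct splice of already-stated lemmas and I anticipate no serious obstacle.
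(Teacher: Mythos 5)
Your proposal is correct and follows essentially the same route as the paper: the same subgroup $H\cong C_{n_2/n_1}\oplus C_{n_3/n_1}$ with quotient $G/H\cong C_{n_1}^3$, the same two applications of Lemma \ref{tomerge}, and the same substitution of $D(H)=\tfrac{n_2}{n_1}+\tfrac{n_3}{n_1}-1$ and $\eta(C_{n_1}^3)\le a_3(n_1-1)+1$. Your explicit construction of $H$ and the remark about degenerate cases are minor additions of care, not a different argument.
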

\begin{proof}
This proof is build on well-know strategy. Let $G$ be a non-trivial finite abelian group $C_{n_1}\oplus C_{n_2}\oplus C_{n_3}$ such that $1<n_1|n_2|n_3\in\mathbb{N}$.
We have that the exponent $\exp(G)=n_3.$ Denoting by $H$ a~subgroup of $G$ such that
\begin{equation}\label{xeq6}
H\cong C_{\tfrac{n_2}{n_1}}\oplus C_{\tfrac{n_3}{n_1}},
\end{equation}
where $\tfrac{n_2}{n_1},\tfrac{n_3}{n_1}\in\mathbb{N}.$
The quotient group $G/H\cong C_{n_1}^3.$\\
By Lemma \ref{tomerge} we get
\begin{equation}\label{xeq7}
D(G)\le D_{D(H)}(G/H)=D_{\tfrac{n_2}{n_1}+\tfrac{n_3}{n_1}-1}(C_{n_1}^3),
\end{equation}
since $D(H)=\tfrac{n_2}{n_1}+\tfrac{n_3}{n_1}-1$ (see Lemma \ref{lem01}).\\
By Lemma \ref{tomerge} and (\ref{xeq5g})
\begin{equation}\label{xeq8}
\begin{split}
D(G)&\le \exp(C_{n_1}^3)(\tfrac{n_2}{n_1}+\tfrac{n_3}{n_1}-2)+\eta(C_{n_1}^3)\le \\
&\le n_1(\tfrac{n_2}{n_1}+\tfrac{n_3}{n_1}-2)+a_3(n_1-1)+1=\\
&= (n_1-1)+(n_2-1)+(n_3-1)+1+ (a_3-3)(n_1-1),
\end{split}
\end{equation}
where $a_3$ is a constant. By (\ref{xeq5c}) and (\ref{xeq5b}) we obtain $a_3\le 20369.$
\end{proof}
\begin{rem}\label{th16}
Let $1<n_1|n_2|n_3\in\mathbb{N}.$ By Theorem \ref{th14a} we have
\begin{equation}\label{xeq9b}
D(C_{n_1}\oplus C_{n_2}\oplus C_{n_3})\le 20367(n_1-1)+(n_2-1)+(n_3-1)+1.
\end{equation}
If $n_3>\tfrac{20367(n_1-1)+n_2-1}{\log{n_1}+\log{n_2}}$ then the upper bound in (\ref{xeq9b}) is smaller than the upper bound from (\ref{xeq1}).
See also \cite{ChMG}.
\end{rem}
\begin{cor}\label{nowyap}
Let $n\ge 2$ be a natural number, and let $\omega(n)$ denote the number of distinct prime factors of $n.$ Then
\begin{equation}
3(n-1)+1\le D(C_n^3)\le \min\{20369,3^{\omega(n)}\}(n-1)+1.
\end{equation}
\end{cor}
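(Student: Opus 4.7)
The lower bound is immediate: by (\ref{xeq1}) we have $3(n-1)+1 = D^*(C_n^3) \le D(C_n^3)$.

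For the upper bound, my plan is to prove two separate estimates and combine them by taking the minimum. First, the bound $D(C_n^3) \le 20369(n-1)+1$ is obtained directly by specializing Theorem \ref{th14a} to $n_1 = n_2 = n_3 = n$: the right-hand side of (\ref{xeq5}) collapses to $3(n-1)+1 + (a_3-3)(n-1) = a_3(n-1)+1$, and by Lemma \ref{th13a} we have $a_3 \le 20369$.

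Second, $D(C_n^3) \le 3^{\omega(n)}(n-1)+1$ would be proved by induction on $\omega(n)$. The base case $\omega(n)=1$, so $n = p^a$, is Olson's theorem: since $C_{p^a}^3$ is a $p$-group, $D(C_{p^a}^3) = D^*(C_{p^a}^3) = 3(p^a-1)+1 = 3^1(n-1)+1$. For the inductive step, write $n = p^a m$ with $p \nmid m$ and $\omega(m) = \omega(n) - 1$; by the Chinese Remainder Theorem, $C_n^3 \cong C_{p^a}^3 \oplus C_m^3$. Take $H = p^a C_n^3 \cong C_m^3$ in Lemma \ref{tomerge}(\ref{xeq2}) to obtain $D(C_n^3) \le D_{D(C_m^3)}(C_{p^a}^3)$. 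The inductive hypothesis gives $D(C_m^3) \le 3^{\omega(n)-1}(m-1)+1$, and together with a sharp bound on $D_k(C_{p^a}^3)$ for the $p$-group factor, an elementary algebraic rearrangement would yield the target $3^{\omega(n)}(n-1)+1$.

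The main obstacle is producing a tight enough bound on $D_k(C_{p^a}^3)$. The generic estimate $D_k(G') \le \exp(G')(k-1) + \eta(G')$ from (\ref{xeq3}) introduces $\eta(C_{p^a}^3)$, which in general strictly exceeds $D(C_{p^a}^3)$ (e.g.\ $\eta(C_2^3) = 8$ while $D(C_2^3) = 4$), so this route does not obviously close the induction. A cleaner alternative is to invoke a multiplicative bound $D(G_1 \oplus G_2) \le D(G_1)\cdot D(G_2)$ for direct sums with coprime orders: applying it iteratively to $C_n^3 \cong \bigoplus_i C_{p_i^{a_i}}^3$ together with Olson's theorem yields $D(C_n^3) \le \prod_i (3(p_i^{a_i}-1)+1)$, and a routine induction on $\omega(n)$ then verifies the purely arithmetic inequality $\prod_i (3(p_i^{a_i}-1)+1) \le 3^{\omega(n)}(n-1)+1$, completing the proof.
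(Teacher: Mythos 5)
Your proposal is correct, and for the $3^{\omega(n)}$ half it takes a genuinely different route from the paper. The paper's proof of this corollary is essentially a citation: the lower bound and the bound $20369(n-1)+1$ come from (\ref{xeq1}) and Theorem \ref{th14a} exactly as you argue, but the bound $D(C_n^3)\le 3^{\omega(n)}(n-1)+1$ is simply quoted from \cite[Theorem 1.2]{ChMG} rather than proved. Your self-contained derivation works: you correctly diagnose that the $\eta$-based estimate (\ref{xeq3}) is too lossy to close an induction on $\omega(n)$, and your replacement --- Olson's theorem $D(C_{p^a}^3)=3(p^a-1)+1$ for $p$-groups together with the submultiplicative bound $D(G_1\oplus G_2)\le D(G_1)D(G_2)$ --- is sound. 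Note that the submultiplicative bound, which you invoke without justification, is itself a consequence of the paper's Lemma \ref{tomerge}: by (\ref{xeq2}) one has $D(G_1\oplus G_2)\le D_{D(G_2)}(G_1)$, and $D_k(G)\le k\,D(G)$ follows by greedily extracting minimal zero-sum subsequences, each of which has length at most $D(G)$. The closing arithmetic inequality $\prod_i(3q_i-2)\le 3^{\omega(n)}(n-1)+1$ (with $q_i=p_i^{a_i}$) does hold: the inductive step reduces to $(3^{r}(Q-1)+1)(3q-2)\le 3^{r+1}(Qq-1)+1$, and the difference of the two sides equals $(3^{r+1}-3)(q-1)+2\cdot 3^{r}(Q-1)\ge 0$. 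What your approach buys is independence from \cite{ChMG}, at the cost of importing Olson's theorem for $p$-groups; the paper's approach is shorter but not self-contained.
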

\begin{proof}
Taking into account the inequality (\ref{xeq1}) and using Theorem \ref{th14a}, we obtain $$3(n-1)+1\le D(C_n^3)\le 20369(n-1)+1.$$
By \cite[Theorem 1.2]{ChMG}, we get
$$D(C_n^3)\le 3^{\omega(n)}(n-1)+1.$$
\end{proof}
Under the assumption that the conjecture of Gao and Thangadurai \cite[Conjecture 0]{GT} is valid, we can surmise that $a_3=8.$
Thus, it seemed desirable to attempt to put the conjecture:
\begin{hip}\label{th15}
Let $G$ be an abelian group $C_{n_1}\oplus C_{n_2}\oplus C_{n_3}$ such that $1<n_1|n_2|n_3\in\mathbb{N}.$
Then
\begin{equation}\label{xeq9}
D^*(G)\le D(G)\le D^*(G)+5(n_1-1),
\end{equation}
where $D^*(G)=n_1+n_2+n_3-2.$
\end{hip}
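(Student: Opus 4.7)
The lower bound $D^*(G)\le D(G)$ is already recorded as (\ref{xeq1}), so only the upper bound needs work. Comparing Conjecture \ref{th15} with Theorem \ref{th14a}, the inequality $D(G)\le D^*(G)+5(n_1-1)$ is equivalent to the sharpened estimate $a_3\le 8$, that is, to the claim $\eta(C_n^3)\le 8(n-1)+1$ for every $n\ge 2$. The plan is therefore to keep the reduction (\ref{xeq6})--(\ref{xeq8}) of Theorem \ref{th14a} intact and concentrate all the new work on improving the constant in $\eta(C_n^3)$.

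To secure $a_3\le 8$, I would follow the same chain as in the proof of Lemma \ref{th13a} but with sharper inputs at every step. By Lemma \ref{lem01}(2) it suffices to prove $s(C_n^3)\le 9(n-1)+1$. By Theorem \ref{thE14}, using the sharp values $b_1=2$ and $b_2=4$ already furnished by Lemma \ref{lem01}, this in turn reduces to the prime-case bound $s(C_p^3)\le 9(p-1)+1$ for every prime $p$. Thus the entire conjecture is equivalent to the rank-three case of the Gao-Thangadurai conjecture referenced just before the statement of Conjecture \ref{th15}.

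The main obstacle is precisely this prime-case bound. The Alon-Dubiner recursion (Remark \ref{AlDu}) yields only $s(C_p^3)<20233\,p$, which is several orders of magnitude too weak, while the trivial Gao-Yang estimate $s(C_p^3)\le p^3+p-1$ used in Lemma \ref{th13a} is helpful only for very small $p$. A realistic programme is twofold: first, verify $s(C_p^3)\le 9p-8$ for all primes up to some explicit threshold $p_0$ via a computer search or a targeted polynomial-method argument over $\mathbb{F}_p$; second, obtain an asymptotic refinement of the Alon-Dubiner induction that pushes the constant $c(3)$ below $9$, thereby closing the range $p>p_0$. A complementary route, which bypasses $s(C_p^3)$ altogether, is to tighten the black-box inequality $D_k(G)\le \exp(G)(k-1)+\eta(G)$ of Lemma \ref{tomerge} for the specific embedding $H\hookrightarrow G$ used in (\ref{xeq6}); any structural savings there translate directly into a smaller multiplier on $(n_1-1)$. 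Either half is a substantial piece of work and represents the principal obstacle to proving Conjecture \ref{th15}; short of such a breakthrough, the present method appears unable to push $a_3$ below a constant in the thousands.
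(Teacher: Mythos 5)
The statement you were asked to prove is labelled as a Conjecture in the paper, and the paper offers no proof of it: the only justification given is the remark immediately preceding it, namely that \emph{if} the Gao--Thangadurai conjecture holds then one can take $a_3=8$, whence Theorem \ref{th14a} yields $D(G)\le D^*(G)+5(n_1-1)$. Your analysis reproduces exactly this reasoning: you keep the reduction (\ref{xeq6})--(\ref{xeq8}), pass from $\eta(C_n^3)\le 8(n-1)+1$ to $s(C_n^3)\le 9(n-1)+1$ via Lemma \ref{lem01}(2), and then to the prime case $s(C_p^3)\le 9(p-1)+1$ via Theorem \ref{thE14} with $b_1=2$, $b_2=4$. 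That chain is correct, it is the same route the paper has in mind, and you are right that the prime-case bound is the genuine obstacle: the Alon--Dubiner constant $c(3)\approx 2\cdot 10^4$ is nowhere near $9$, and no amount of tinkering with the small-prime regime closes the gap. Your honest conclusion that a proof is out of reach by these methods is the correct assessment of the state of the art.

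One small logical correction: you assert twice that the conjectured inequality is \emph{equivalent} to $a_3\le 8$ (respectively to the rank-three Gao--Thangadurai conjecture). It is only \emph{implied} by those statements. Even in the balanced case $n_1=n_2=n_3=n$ the conjecture asserts $D(C_n^3)\le 8(n-1)+1$, and since $D(C_n^3)\le\eta(C_n^3)$ this is weaker than $\eta(C_n^3)\le 8(n-1)+1$; the Davenport bound could in principle hold while $a_3>8$. So the conjecture might admit a proof that does not pass through $\eta$ or $s$ at all, and conversely a disproof of $a_3\le 8$ would not refute it.
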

We conclude with an application of Theorem \ref{th14a}. If $F$ is a~set of prime integers, then we shall refer to a positive integer each of whose prime factors belong to $F$ as a smooth over a set $F$. The  smooth numbers are related to the Quadratic sieve, and are imported in cryptography in the fastest known integer factorization algorithms. Let $|F|=r.$
We denote by $c(n,r)$ the least positive integer $t$ such that any sequence $S,$ of length $t,$ of smooth integers over $F,$ has a nonempty subsequence $S'$ such that the product of all the terms pf $S'$ is an $n$th power of integer. It is known that $c(n,r)=D(C_n^r)$ see \cite[Theorem 1.6]{ChMG}. Thus by Corollary \ref{nowyap} we obtain:
\begin{thm}
If $n\ge 2$ integer, then
\begin{equation}
c(n,3)\le \min\{20369,3^{\omega(n)}\}(n-1)+1.
\end{equation}
\end{thm}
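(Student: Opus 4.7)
The plan is to reduce the statement directly to a zero-sum problem in $C_n^3$ and then quote results already established in the paper. The bridge is the identification $c(n,r)=D(C_n^r)$ recorded from \cite[Theorem 1.6]{ChMG}: a sequence of $F$-smooth integers (with $|F|=r$) whose product is an $n$th power corresponds, via the exponent vectors reduced modulo $n$, to a zero-sum sequence in the group $C_n^r$, and the extremal lengths coincide. I would open the proof by stating this correspondence explicitly, so that the theorem reduces to bounding $D(C_n^3)$.

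Once the problem is rephrased as bounding $D(C_n^3)$, I would apply Corollary \ref{nowyap} to obtain
\begin{equation*}
D(C_n^3)\le \min\{20369,3^{\omega(n)}\}(n-1)+1,
\end{equation*}
which is exactly the asserted bound on $c(n,3)$. Thus the whole proof is essentially a two-line chain of quotations: $c(n,3)=D(C_n^3)$ by \cite[Theorem 1.6]{ChMG}, then $D(C_n^3)\le \min\{20369,3^{\omega(n)}\}(n-1)+1$ by Corollary \ref{nowyap}.

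There is effectively no obstacle here; the substantive work has already been done in Lemma \ref{th13a} (which pins down $a_3\le 20369$ via the Alon--Dubiner recursion and Edel--Elscholtz--Geroldinger--Kubertin--Rackham) and in Theorem \ref{th14a} (which yields the linear-in-$n_1$ upper bound via the subgroup-quotient method with $H\cong C_{n_2/n_1}\oplus C_{n_3/n_1}$ and $G/H\cong C_{n_1}^3$). The $3^{\omega(n)}$ alternative comes from \cite[Theorem 1.2]{ChMG}, which is folded into Corollary \ref{nowyap}. Taking the minimum of the two bounds is what makes the statement useful across the full range of $n$: the constant bound $20369$ dominates when $n$ has many small prime factors, whereas $3^{\omega(n)}$ is sharper when $\omega(n)$ is small. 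Since no new combinatorial argument is needed, I would keep the proof to a short paragraph, citing Corollary \ref{nowyap} and the identification $c(n,r)=D(C_n^r)$.
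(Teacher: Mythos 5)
Your proposal is correct and matches the paper's own argument exactly: the paper derives this theorem by quoting the identification $c(n,r)=D(C_n^r)$ from \cite[Theorem 1.6]{ChMG} and then applying Corollary \ref{nowyap}. Nothing further is needed.
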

\normalsize \baselineskip=17pt


Maciej Zakarczemny\\
Institute of Mathematics\\
Cracow University of Technology\\
Warszawska 24\\
31-155 Krak\'ow, Poland\\
E-mail: mzakarczemny@pk.edu.pl
\end{document}